\begin{document}
\renewcommand{\emptyset}{\varnothing}
\newtheorem{theorem}{Theorem}
\newtheorem{conjecture}[theorem]{Conjecture}
\newtheorem{proposition}[theorem]{Proposition}
\newtheorem{question}[theorem]{Question}
\newtheorem{lemma}[theorem]{Lemma}
\newtheorem{cor}[theorem]{Corollary}
\newtheorem{obs}[theorem]{Observation}
\newtheorem{proc}[theorem]{Procedure}
\newtheorem{defn}[theorem]{Definition}
\newcommand{\comments}[1]{} 
\def\Z{\mathbb Z}
\def\Za{\mathbb Z^\ast}
\def\Fq{{\mathbb F}_q}
\def\R{\mathbb R}
\def\N{\mathbb N}
\def\cH{\overline{\mathcal H}}
\def\cF{\mathcal F}
\def\C{\mathbb C}
\def\P{\mathcal P}

\title[Counting Clean Triangles]{To count clean triangles\\ we count on $imph(n)$}
\author{Mizan R. Khan}
\address{MRK: Department of Mathematical Sciences, Eastern Connecticut State University, Willimantic, CT 06226}
\email{khanm@easternct.edu}

\author{Riaz R. Khan}
\address{RRK: Johannesburg, South Africa}
\email{riaz.r.khan@gmail.com}

\date{\today}

\maketitle

\begin{center} \emph{Dedicated to Ammi on her 89th birthday} \end{center}

\begin{abstract} 
A clean lattice triangle in $\R^2$ is a triangle that does not contain any lattice points on its sides other than its vertices. The central goal of this paper is to count the number of clean triangles of a given area up to unimodular equivalence. In doing so we use a variant of the Euler phi function which we call $imph(n)$ (imitation phi). Nothing in this article is original. In addition to Scott's~\cite{Sco} work, almost certainly all of this material is squirrelled away in some of Bruce Reznick's papers. However our hope is that after skimming this article our dynamic, energetic and enthusiastic reader (DEER in short) will try to discover a suitable arithmetic function called $oomph$ and a suitable class of arithmetic functions called $oops$.
\end{abstract}

\section{Introduction}

This expository article has two goals. The first (and minor) goal is to publicize a beautiful inequality for convex lattice polygons discovered by Scott~\cite{Sco} in the mid seventies. In doing so we arrive at a situation where, with a little more work, we can determine the number of clean lattice triangles of fixed area up to unimodular equivalence. This count is the central theme of our article. Throughout the article we will only work with lattice triangles in $\R^2$ with the lattice being $\Z^2$. The remainder of the introduction is devoted to listing some definitions and stating two well known theorems --- Pick's theorem and Burnside's lemma.

\begin{defn}
A lattice triangle is said to be \emph{clean} if the only lattice points on its sides are the vertices. If a clean lattice triangle does not contain any lattice in its interior, then we call it an \emph{empty} triangle.
\end{defn}

\noindent {\bf Remarks}: A lattice triangle in $\R^2$ is empty if and only if it has area 1/2. This does not hold in higher dimensions. Indeed there are no bounds on the volume of empty lattice tetrahedra. The terms \emph{clean} and \emph{empty} are due to Reznick~\cite{Rez}. In the literature empty lattice tetrahedra are sometimes called fundamental or primitive or Reeve.

\begin{defn} 
An \emph{affine unimodular map} is an affine map 
$$L: \R^n \rightarrow \R^n \textrm{ of the form } L(\vec{x}) = M\vec{x} + \vec{u},$$
where $M \in GL_n(\Z)$, $\det(M) =\pm 1$, and $\vec{u}\in \Z^n$. 
\end{defn}

The dynamic, energetic and enthusiastic reader (DEER) should note that affine unimodular maps preserve the lattice. We caution DEER that on occasion we omit the word \emph{affine} and simply say \emph{unimodular map}. In such an event DEER should keep in mind that we may be translating by a non-zero vector. If we want to emphasize that we are only acting by an unimodular matrix and there is no translation, then we will use the term \emph{unimodular transformation}.

\begin{defn}
Let $P_1, P_2$ be two lattice polygons. We say that $P_1$ and $P_2$ are \emph{unimodularly equivalent} if there is an affine unimodular map $L$ such that $L(P_1)=P_2$. 
\end{defn}

We will need to invoke Pick's theorem in a couple of places. This is a lovely elementary result that relates the area of a lattice polygon to the number of lattice points in the polygon (including the sides). 

\begin{theorem}[Pick]
Let $C$ be a simple polygon in $\R^2$ with vertices in $\Z^2$. Let 
$$I = \#(\Z^2 \cap interior(C)) \textrm{ and } B= \#(\Z^2 \cap boundary(C)).$$
Then,
\begin{equation}
area(C) = I+ B/2-1.
\end{equation}
\end{theorem}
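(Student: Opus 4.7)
The plan is to prove Pick's theorem by the standard two-step strategy: first establish an \emph{additivity property} for the functional $f(P) := I(P) + B(P)/2 - 1$ under gluing along a shared lattice polygonal chain, then triangulate an arbitrary simple lattice polygon into lattice triangles and verify the formula on a single triangle.

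\textbf{Step 1 (Additivity).} Suppose $P_1$ and $P_2$ are simple lattice polygons whose interiors are disjoint and whose union $P = P_1 \cup P_2$ is again a simple lattice polygon, with $P_1 \cap P_2$ a common polygonal chain containing exactly $k \ge 2$ lattice points (two of which are the extreme endpoints that remain on $\partial P$). I would check the bookkeeping
\[
I(P) = I(P_1)+I(P_2) + (k-2), \qquad B(P) = B(P_1)+B(P_2) - 2(k-2) - 2,
\]
and then verify by direct substitution that $f(P) = f(P_1) + f(P_2)$. Since area is clearly additive, it suffices to prove $f = \text{area}$ on a class of building blocks that generates all simple lattice polygons under gluing.

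\textbf{Step 2 (Base cases).} I would verify the formula in order on: (a) the $a \times b$ axis-aligned rectangle, where $I = (a-1)(b-1)$, $B = 2(a+b)$, and the formula reduces to $ab$; (b) axis-aligned right triangles, obtained by slicing such a rectangle along a diagonal, noting that the diagonal from $(0,0)$ to $(a,b)$ contains $\gcd(a,b)+1$ lattice points and that the two resulting triangles are unimodularly congruent and share the same $I$ and $B$ values, so a single symmetry-plus-additivity computation gives $f$ on each; (c) an arbitrary lattice triangle $T$, enclosed in its axis-aligned bounding rectangle $R$ and obtained from $R$ by excising at most three right triangles and possibly one more axis-aligned rectangle, then applying additivity in reverse.

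\textbf{Step 3 (Triangulation and induction).} I would then invoke the fact that every simple polygon with $n$ vertices admits an ear decomposition into $n-2$ triangles using only its own vertices; for a lattice polygon with boundary lattice points inserted as additional ``vertices,'' this gives a triangulation into lattice triangles. Induction on the number of triangles in such a triangulation, using Step 1 at each gluing, reduces the theorem to Step 2.

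\textbf{Main obstacle.} The genuine content lies in Step 1 together with the diagonal-of-a-rectangle count in Step 2(b); the rest is bookkeeping. In particular, the $\gcd(a,b)$ count of lattice points strictly between $(0,0)$ and $(a,b)$ is what reconciles the apparent asymmetry between $I$ and $B$ in the formula $f = I + B/2 - 1$, and it is easy to lose track of which shared points transition from boundary to interior when gluing. Writing the additivity identity once and for all, and then never re-deriving it, is the cleanest way to keep the induction honest.
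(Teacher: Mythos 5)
Your outline is the standard (and correct) proof of Pick's theorem; the paper itself offers no proof to compare against, since it merely states the theorem as a known classical result to be invoked later (for the area of empty triangles and in the proof of Scott's inequality). Your additivity bookkeeping in Step 1 checks out: substituting $I(P)=I_1+I_2+(k-2)$ and $B(P)=B_1+B_2-2(k-2)-2$ into $f=I+B/2-1$ does give $f(P)=f(P_1)+f(P_2)$, and the chain of base cases rectangle $\rightarrow$ right triangle $\rightarrow$ arbitrary triangle is the usual route. One small simplification you could make in Step 3: there is no need to insert the boundary lattice points as extra ``vertices'' before triangulating (doing so creates degenerate angles of $\pi$ and forces you to re-justify the two-ears theorem); since your Step 2(c) already handles arbitrary lattice triangles and your Step 1 additivity already accounts, via $k$, for lattice points lying on the shared diagonal, you can triangulate using only the true corner vertices and induct directly.
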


From Pick's theorem we immediately obtain two facts. The first being that any lattice triangle in $\R^2$ is empty if and only if it has area 1/2. The second is that the area of a clean triangle has to be of the form $n/2$ with $n$ odd. Finally we state the combinatorial result which is usually called Burnside's lemma. It will deliver the coup de grace for our counting formula.

\begin{theorem}[Burnside's lemma]
Let $G$ be a finite group that acts on a finite set $X$. Then the number of distinct orbits, $N$, for this action is
$$ N= \frac{1}{\#G}\sum_{g\in G} \#Fix(g),$$
where $Fix(g)= \{x \in X \, : \, g(x)=x \}$, that is, $Fix(g)$ is the set of fixed points of $g$.
\end{theorem}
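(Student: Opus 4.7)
The plan is a standard double-counting argument applied to the set
$$S = \{(g,x) \in G\times X \, : \, g(x)=x\}.$$
First I would compute $\#S$ by fibering over the first coordinate: for each $g\in G$ the elements of $S$ lying above $g$ are precisely the fixed points of $g$, so
$$\#S = \sum_{g\in G} \#Fix(g).$$
This is the right-hand side of the statement, up to the factor of $1/\#G$, so the whole task reduces to showing that $\#S = N \cdot \#G$.

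Next I would compute $\#S$ the other way, by fibering over the second coordinate. For $x\in X$ let $Stab(x) = \{g \in G : g(x)=x\}$ and $Orb(x) = \{g(x) : g \in G\}$. Then
$$\#S = \sum_{x\in X} \#Stab(x).$$
The key input is the orbit-stabilizer theorem, which says $\#Stab(x) \cdot \#Orb(x) = \#G$; I would either quote this or give the one-line proof (the map $G \to Orb(x)$ sending $g \mapsto g(x)$ has fibers that are the cosets of $Stab(x)$). Substituting yields
$$\#S = \#G \sum_{x\in X} \frac{1}{\#Orb(x)}.$$

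Finally I would partition $X$ into its orbits $\mathcal O_1,\ldots,\mathcal O_N$. On each orbit $\mathcal O_i$ every $x$ satisfies $\#Orb(x) = \#\mathcal O_i$, so
$$\sum_{x\in \mathcal O_i} \frac{1}{\#Orb(x)} = \sum_{x\in \mathcal O_i} \frac{1}{\#\mathcal O_i} = 1,$$
and summing over the $N$ orbits gives $\sum_{x\in X} 1/\#Orb(x) = N$. Combining this with the previous display gives $\#S = N \cdot \#G$, and equating with the first count yields the claimed formula. The one potentially subtle step is the invocation of orbit-stabilizer; everything else is bookkeeping, so I expect no real obstacle beyond making the partition-of-$X$-into-orbits step explicit enough that the cancellation $\#Orb(x) = \#\mathcal O_i$ is unambiguous.
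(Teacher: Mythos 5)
Your double-counting argument is correct and complete: counting the incidence set $S=\{(g,x): g(x)=x\}$ both ways and invoking orbit--stabilizer is the standard proof of Burnside's lemma. The paper itself only states this result without proof (it is quoted as a known tool), so there is nothing in the source to compare against; your write-up would serve as a perfectly adequate proof if one were wanted.
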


\section{Scott's Inequality for Lattice Triangles}

We begin by showing that any lattice triangle in $\R^2$ is unimodularly equivalent to a lattice triangle with a horizontal base. Scott~\cite{Sco} combined this observation with Pick's theorem to discover an inequality between the number of interior lattice points and boundary lattice points for convex lattice polygons that contain at least one interior lattice point. In the interest of keeping the proofs short we restrict our proof of Scott's theorem to lattice triangles. We  then return to this lemma to implement our approach to counting clean triangles of fixed area. Thus it is reasonable to say that Lemma~\ref{unimod-map} forms the foundation of both of our main results.

\begin{lemma}\label{unimod-map}
Let $\Delta$ be a lattice triangle in $\R^2$. Then $\Delta$ is unimodularly equivalent  to a lattice triangle $\triangle$ with vertices $(0,0), (b,0)$ and $(m,h)$ with the integers $b,m,h$ satisfying the following inequalities. 

\begin{itemize}
\item $ b,h > 0; m\ge 0; m< h;$
\item $b \ge \gcd(m,h), \gcd(m-b,h).$
\end{itemize}

\noindent $($Remark: The inequality  $b \ge \gcd(m,h), \gcd(m-b,h)$ ensures that the horizontal base of $\triangle$ does {\bf not} contain fewer lattice points than either of the other two sides of $\triangle$.$)$
\end{lemma}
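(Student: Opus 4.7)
The plan is to achieve the normal form by a sequence of explicit unimodular moves: (i) translate a chosen vertex to the origin, (ii) map the side issuing from that vertex to the positive $x$-axis using a $GL_2(\Z)$ matrix built from B\'ezout, (iii) reflect if necessary so the third vertex lies in the upper half-plane, and (iv) shear horizontally to reduce the $x$-coordinate of the third vertex modulo its height. The inequality $b \ge \gcd(m,h), \gcd(m-b,h)$ will be arranged in advance by choosing, as the distinguished side, a side of $\Delta$ that carries the greatest number of lattice points.

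First I would order the three sides of $\Delta$ by the number of lattice points they contain, and pick a side $S$ realizing the maximum. Label its endpoints $\vec{v}_0, \vec{v}_1$ and let $\vec{v}_2$ be the third vertex. Applying the unimodular translation $\vec{x} \mapsto \vec{x} - \vec{v}_0$ sends $\vec{v}_0$ to the origin; write the image of $\vec{v}_1$ as $(p,q)$ and set $d := \gcd(p,q)$, so $(p,q) = d(p',q')$ with $\gcd(p',q')=1$. Choose $s,t \in \Z$ with $sp' + tq' = 1$ and put
$$M := \begin{pmatrix} s & t \\ -q' & p' \end{pmatrix} \in GL_2(\Z), \qquad \det M = 1.$$
A direct computation gives $M(p,q)^T = (d,0)^T$, so after applying the unimodular transformation $M$ the triangle has vertices $(0,0)$, $(b,0)$ with $b := d$, and some $(m',h')$ which is the image of $\vec{v}_2 - \vec{v}_0$. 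Nondegeneracy forces $h' \neq 0$; if $h' < 0$ I apply the further unimodular transformation $(x,y) \mapsto (x,-y)$, so we may assume $h := h' > 0$.

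Next I would kill the residue of $m'$ modulo $h$ using the horizontal shear $(x,y) \mapsto (x + ky, y)$, which is unimodular for every $k \in \Z$ and fixes the base $(0,0)$--$(b,0)$ pointwise. Choosing the unique $k$ for which $0 \le m' + kh < h$ and setting $m := m' + kh$ yields a triangle with the required vertex data and inequalities $b,h > 0$, $m \ge 0$, $m < h$.

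The remaining inequality is where the initial choice of $S$ pays off. The number of lattice points on a lattice segment from $(x_1,y_1)$ to $(x_2,y_2)$ equals $\gcd(|x_2-x_1|,|y_2-y_1|)+1$; so our three sides carry $b+1$, $\gcd(m,h)+1$, and $\gcd(m-b,h)+1$ lattice points respectively. Since unimodular maps send lattice points to lattice points bijectively, the side $S$ we selected is still the maximum after all the transformations above, i.e.\ the base; therefore $b+1 \ge \gcd(m,h)+1$ and $b+1 \ge \gcd(m-b,h)+1$, which is exactly the asserted inequality. The main conceptual step, and the one most worth highlighting, is the B\'ezout construction of $M$ in step (ii); everything else is bookkeeping. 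The main pitfall to watch for is insisting that the chosen side genuinely maximize the lattice-point count before performing any of the moves, since otherwise the final inequality need not hold.
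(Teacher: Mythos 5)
Your proposal is correct and follows essentially the same route as the paper: choose the side with the most lattice points, translate it to the origin, send it to the positive $x$-axis via a B\'ezout-built $GL_2(\Z)$ matrix, reflect into the upper half-plane if needed, and shear to put $m$ in $[0,h)$, with the gcd inequality inherited from the initial choice of side. The only difference is cosmetic: you write the matrix explicitly where the paper specifies the map by its action on a basis.
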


\begin{proof} 
Since unimodular equivalence permits translation by lattice points, we may assume, without loss of generality, that the vertices of $\Delta$ are $(0,0),\vec{u}= (u_1,u_2)$ and $\vec{v}=(v_1,v_2).$ Furthermore, we can also assume that the side consisting of the vertices $(0,0)$ and $\vec{u}$ does {\bf not} contain fewer lattice points than either of the other two sides of $\Delta$. (If this is not the case, then we simply adjust by the appropriate translation.)  

We now consider the lattice point $\vec{w} = (u_1,u_2)/\gcd(u_1,u_2)=(w_1,w_2)$. Since $\gcd(w_1,w_2)=1$, by the extended Euclidean Algorithm there exist $x_1,x_2\in \Z$ such that 
$$x_2w_1-x_1w_2=1.$$
We now consider the unimodular map 
$T_0 : \R^2 \rightarrow \R^2$ via 
$$T_0(\vec{w}) = (1,0), T_0(\vec{x}) =(0,1).$$

If $T_0(\vec{v})$ lies in the lower half plane, then we simply apply the unimodular transformation $T_1 : \R^2 \rightarrow \R^2$ given by the matrix 
\begin{equation*}
 \begin{pmatrix}1 & 0 \\ 0 & -1 \\ \end{pmatrix}
\end{equation*}
and obtain that $T_1 \circ T_0 (\vec{u}) =\gcd(u_1,u_2)\cdot e_1$ and $T_1 \circ T_0(\vec{v})$ lies in the upper half plane. 

We have shown the existence of an unimodular map that sends $\vec{w} \mapsto (1,0)$ and $\vec{v} \mapsto \vec{a}= (a_1,a_2)$ with $a_2 \in \Z^+$. We set 
\begin{equation}
b =\gcd(u_1,u_2) \textrm{ and } h = a_2.
\end{equation}

If $0\le a_1 <a_2=h$, then we simply set $m =a_1$. What if we have that $a_1 <0$ or $a_1 \ge h$? In either case we apply the division algorithm to express 
$$ a_1 = qh +r \textrm{ with } 0\le r < h.$$
We then apply the unimodular transformation $T_2 : \R^2 \rightarrow \R^2$ given by the matrix 
\begin{equation*}
 \begin{pmatrix}1 & -q \\ 0 & 1 \\ \end{pmatrix}.
\end{equation*}
and obtain that $T_2((b,0))=(b,0)$ and $T_2((a_1,h))=(r,h)$. We then set $m=r$. Thus in all cases some combination of $T_0,T_1,T_2$ will do the trick.
\end{proof}

An example of this result is that the triangle with vertices $(0,0), (-3,-3)$ and $(2, 4)$ is unimodularly equivalent to the triangle with vertices $(0,0),(3,0)$ and $(2,2)$. See Figure~\ref{fig:scott triangle}. 

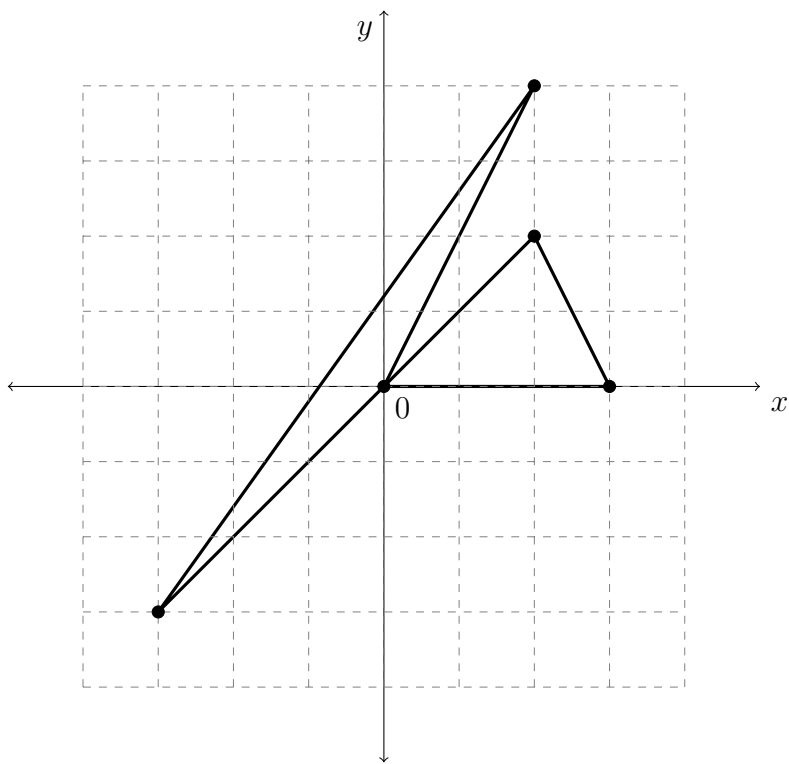
\begin{figure}
\begin{center}
\begin{tikzpicture}[scale=1]
\coordinate [label=below right:0] (0) at (0,0);
\coordinate [label=below right:$x$] ($x$) at (5,0);
\coordinate [label=below left:$y$] ($y$) at (0,5);

\draw[<->] (-5,0) -- (5,0);
\draw[<->] (0,-5) -- (0,5) ;
\draw [very thick] (0,0) --(3,0) -- (2,2) -- (0,0);
\draw [very thick] (0,0) --(-3,-3) -- (2,4) -- (0,0);
\draw[dashed,help lines] (-4,-4) grid (4,4);
\draw[fill] (2,2) circle [radius=0.08];
\draw[fill] (0,0) circle [radius=0.08];
\draw[fill] (3,0) circle [radius=0.08];
\draw[fill] (2,4) circle [radius=0.08];
\draw[fill] (-3,-3) circle [radius=0.08];

\end{tikzpicture}
\end{center}
\caption{Two triangles that are unimodularly equivalent}
\label{fig:scott triangle}
\end{figure}

We have set up the machinery to prove Scott's theorem. We  begin our discussion by invoking a fundamental edict of the Mighty Gelfand.
\begin{center}\emph{Mathematics must be done on the simplest nontrivial examples!} 
\end{center}
(See~\cite{DZ}.) For us, the key example is the right triangle that has legs of length 3, see Figure~\ref{fig:extremecase}. There is one lattice point in the interior and nine lattice points on the boundary and therefore we have the trivial inequality
$$2 \cdot\# (\textrm{interior lattice points}) - 
\#(\textrm{boundary lattice points}) +7 \geq 0 $$
for this triangle. Scott's theorem states that this relationship holds for \emph{all} convex lattice polygons that contain at least one lattice point in their interior. Furthermore, it is only when we have this particular triangle that we get equality! In all other cases we have strict inequality.

\begin{figure}
\begin{center}
\begin{tikzpicture}[scale=1]
\draw[help lines] (0,0) grid (5,5);
\draw [very thick] (1,1) --(1,4) -- (4,1) -- (1,1);
\draw[fill] (2,2) circle [radius=0.1];
\draw [fill] (1,1) circle [radius=0.1];
\draw [fill] (1,2) circle [radius=0.1];
\draw [fill] (1,3) circle [radius=0.1];
\draw [fill] (1,4) circle [radius=0.1];
\draw [fill] (2,1) circle [radius=0.1];
\draw [fill] (3,1) circle [radius=0.1];
\draw [fill] (4,1) circle [radius=0.1];
\draw [fill] (2,3) circle [radius=0.1];
\draw [fill] (3,2) circle [radius=0.1];
\end{tikzpicture}
\end{center}
\caption{The only time we get equality for Scott's theorem}
\label{fig:extremecase}
\end{figure}
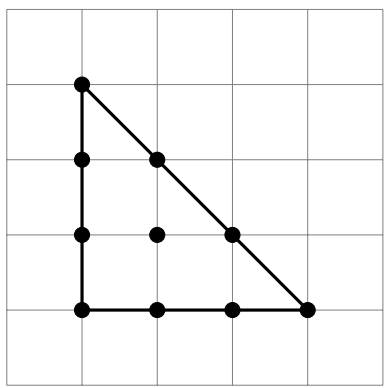

\begin{theorem}[Scott]
Let $T$ be a lattice triangle in $\R^2$ with vertices in $\Z^2$. Let 
$$I = \#(\Z^2 \cap interior(T)) \textrm{ and } B= \#(\Z^2 \cap boundary(T)),$$
with $I \geq 1$. Then,
\begin{equation}\label{eq:Scottineq}
B \leq 2I + 7.
\end{equation}
\end{theorem}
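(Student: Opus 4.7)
My plan is to invoke Lemma~\ref{unimod-map} and replace $T$ by its normal form $\triangle$ with vertices $(0,0),(b,0),(m,h)$ satisfying the stated inequalities. Writing $d_1=\gcd(m,h)$ and $d_2=\gcd(b-m,h)$, each edge of $\triangle$ carries $\gcd+1$ lattice points, so subtracting the three double-counted vertices gives
\[ B \;=\; b + d_1 + d_2. \]
Since $\mathrm{area}(\triangle)=bh/2$, Pick's theorem rewrites $I$ as $bh/2 - B/2 + 1$, and the target inequality $B\le 2I+7$ becomes the purely arithmetic statement
\begin{equation*}
2(b+d_1+d_2)\;\le\; bh+9. \qquad (\star)
\end{equation*}

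The hypothesis $I\ge 1$ would be used only once: when $h=1$, Pick forces $B=b+2$ and $I=0$, so I may assume $h\ge 2$. The rest of the plan exploits two complementary bounds on $d_1+d_2$. On one hand, Lemma~\ref{unimod-map} guarantees $d_1,d_2\le b$, so $d_1+d_2\le 2b$. On the other, $d_1$ and $d_2$ both divide $h$, so $d_1+d_2\le 2h$.

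I would split on whether $b\le h$ or $b>h$. When $b\le h$, the bound $d_1+d_2\le 2b$ turns $(\star)$ into $b(6-h)\le 9$; using $b\le h$ it suffices to verify $h(6-h)\le 9$ for $h\ge 2$, which holds with equality exactly at $h=3$. A quick check with the normal form then pins the equality case at $(b,m,h)=(3,0,3)$, recovering precisely the triangle of Figure~\ref{fig:extremecase}. When $b>h$, the bound $d_1+d_2\le 2h$ turns $(\star)$ into $(b-4)(h-2)\ge -1$, which is automatic whenever $b\ge 4$ and $h\ge 2$; the handful of residual small cases ($b\in\{2,3\}$ with $h<b$, hence $h=2$) are checked by hand and satisfy the inequality strictly.

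The hard part is not any single clever step but the bookkeeping of the case split: one needs the Lemma~\ref{unimod-map} constraint $d_1,d_2\le b$ to dispatch the large-$h$ regime, the divisibility $d_i\mid h$ to dispatch the large-$b$ regime, and the $I\ge 1$ hypothesis to knock out the degenerate $h=1$ strip. Any one of these bounds used in isolation is too weak; it is the combination that delivers Scott's inequality and simultaneously isolates the unique extremal configuration.
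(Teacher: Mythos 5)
Your proposal is correct and follows essentially the same route as the paper: normalize via Lemma~\ref{unimod-map}, express $B=b+\gcd(m,h)+\gcd(m-b,h)$, use Pick's theorem to reduce Scott's inequality to $2B\le bh+9$, and finish with the bounds $\gcd\le b$ and $\gcd\le h$ plus a small case analysis. Your bookkeeping (splitting on $b\le h$ versus $b>h$ and using the factorization $(b-4)(h-2)$) is organized a little differently from the paper's $(b-3)(h-3)-7$ estimate with its two residual cases, but the underlying ideas are identical.
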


\begin{proof} Lemma~\ref{unimod-map} permits us to assume that vertices of our lattice triangle $T$ are $(0,0), (b,0)$ and $(m,h)$ with
\begin{itemize}
\item $ b,h > 0; m\ge 0; m< h;$
\item $b \ge \gcd(m,h), \gcd(m-b,h).$
\end{itemize}
We note that the base and height of $T$ are of length $b$ and $h$ respectively. Furthermore, since $T$ has an interior lattice point, $h\ge 2$. (It should be noted that the base of $T$ can be of length 1. For example it could be the triangle with vertices $(0,0), (1,0)$ and $(3,5)$.)

We now apply Pick's theorem to express the term $(2I-B)$ as 
\begin{equation}\label{eq:rwkP}
2I-B = 2(\textrm{area}(T) - B +1)=bh -2B+2.
\end{equation}
Since the number of lattice points lying on the line segment with endpoints $(x_1,y_1),(x_2,y_2) \in \Z^2$ equals 
$(\gcd(x_1-x_2,y_1-y_2) +1)$  we can rewrite~\eqref{eq:rwkP}  as
\begin{equation} 
2I-B= bh- 2(b +\gcd(m,h) + \gcd(m-b,h)) +2.
\end{equation}
On judiciously applying the inequalities 
$$\gcd(m,h) \leq b, \gcd(m,h) \leq h, \textrm{ and } \gcd(m-b,h) \leq h$$ we get the inequality 
$$2(b +\gcd(m,h) + \gcd(m-b,h)) \leq 3(b+h).$$ 
This leads us to the inequality 
\begin{equation}
2I-B \geq bh-3b-3h+2= (b-3)(h-3)-7.
\end{equation}
When $b,h \geq 3$ or $b, h \leq 2$,  we get $(b-3)(h-3) \geq 0$ and there is nothing further to prove. It remains to consider the cases when $(b-3)(h-3) < 0$. This occurs in two cases:
\begin{enumerate}
\item  $b\leq 2, h \geq 4$; 
\item $ b \geq 4, h = 2.$
\end{enumerate}

In case 1, we have that $B \leq 6$, and consequently $2I - B \geq -4.$  In case 2 we have that
$$ 2I-B = 2b -2B+2 \textrm{ and } B \leq b+4.$$
Consequently, $2I-B = 2b -2B+2 \geq -6.$ Thus in all possible cases we get the inequality $2I-B+7 \geq 0.$
\end{proof}

A small aside. We sent a draft of the paper to Paul Scott. He wrote back a lovely note where he mentioned the following: \emph{I think this lattice polygon result was the  best thing I did.  There were lots of other more serious discoveries, but I always liked this one.}

We end this section with the following (non-routine) exercise.  Scott~\cite{Sco} solved it using lattice quadrilaterals and  separating them into three different cases. The last case requires a deft touch. 

\noindent {\bf Exercise:}  Prove that Scott's inequality holds for arbitrary convex polygons. That is, prove the following: Any convex lattice polygon ${\mathcal C} \subset \R^2$, with 
$(\textrm{interior}({\mathcal C})\cap \Z^2) \not= \emptyset,$ satisfies Scott's inequality~\eqref{eq:Scottineq}. Furthermore, the only time we get equality is when ${\mathcal C}$ is an isosceles right triangle with legs of length 3.

\section{Counting Clean Triangles} 

Earlier we noted that any clean triangle has area $h/2$, with $h$ odd. By Lemma~\ref{unimod-map} such a triangle is unimodularly equivalent to a triangle with vertices $(0,0), (1,0)$ and $(m,h)$, with 
$$0\le m < h, \gcd(m,h)=\gcd(m-1,h)=1.$$ 
The question arises  whether we can find an expression for the number of inequivalent clean lattice triangles of area $h/2$.   In the late nineties the first author solved a similar question for empty tetrahedra of fixed volume~\cite{Kh1}. We apply the same method here. However there are some nuances here that were not present in the case of empty tetrahedra.

\subsection{The arithmetic function $imph$}

We start by defining a variant of the Euler phi function. Almost certainly it has arisen elsewhere and  has a well established name. Nevertheless in a spirit of whimsy we have decided to name it $imph(n)$ (imitation phi). (Unfortunately the acronym for \emph{ fake phi} is completely unsuitable!)

\begin{defn}
For $n\in N$, let 
$$ IP(n) =\{ x\in \N \; : \; 1\le x \le n, \gcd(x-1,n)=\gcd(x,n) =1 \}.$$ 
We now define the arithmetic function $imph(n)$ to be the cardinality of the set $IP(n)$, that is,
$$ imph(n) = \#IP(n).$$
\end{defn}

\begin{lemma}\label{imphmult}
$imph(n)$ is a multiplicative function.
\end{lemma}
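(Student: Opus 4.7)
The plan is to mimic the standard proof that Euler's $\phi$ function is multiplicative, using the Chinese Remainder Theorem. Fix $m,n\in\N$ with $\gcd(m,n)=1$; the aim is to exhibit a bijection between $IP(mn)$ and $IP(m)\times IP(n)$.

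First I would observe that the defining conditions $\gcd(x,n)=1$ and $\gcd(x-1,n)=1$ depend only on the residue class of $x$ modulo $n$. Hence $IP(n)$ is nothing but a complete set of representatives for a well-defined subset $\overline{IP}(n)\subseteq \Z/n\Z$, with $imph(n)=\#\overline{IP}(n)$; the same holds for $m$ and $mn$. This justifies moving freely between integer representatives and residue classes.

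Next I would invoke the elementary fact that for $\gcd(m,n)=1$ and any integer $a$, $\gcd(a,mn)=1$ if and only if $\gcd(a,m)=1$ and $\gcd(a,n)=1$. Applying this once with $a=x$ and once with $a=x-1$ shows that $x\in \overline{IP}(mn)$ exactly when all four of the gcds $\gcd(x,m),\gcd(x,n),\gcd(x-1,m),\gcd(x-1,n)$ equal $1$, which is to say $x\bmod m \in \overline{IP}(m)$ and $x\bmod n \in \overline{IP}(n)$.

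Finally, the CRT isomorphism $\Z/mn\Z \to \Z/m\Z\times\Z/n\Z$, $x\mapsto (x\bmod m,\,x\bmod n)$, restricts by the previous paragraph to a bijection $\overline{IP}(mn)\to \overline{IP}(m)\times\overline{IP}(n)$. Taking cardinalities gives $imph(mn)=imph(m)\cdot imph(n)$, as required. I do not expect any real obstacle: the only delicate point is verifying that \emph{both} conditions $\gcd(x,\cdot)=1$ and $\gcd(x-1,\cdot)=1$ decouple under CRT, and this is immediate since CRT respects the ring structure, so the shift $x\mapsto x-1$ commutes with reduction modulo $m$ and modulo $n$.
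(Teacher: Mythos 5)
Your proof is correct and follows exactly the paper's approach: the paper's own proof is precisely the Chinese Remainder Theorem bijection $x \mapsto (x \bmod m,\, x \bmod n)$ from $IP(mn)$ to $IP(m)\times IP(n)$, which you spell out in more detail. No differences to report.
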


\begin{proof}
Let $m_1,m_2\in \N$ with $\gcd(m_1,m_2)=1$. By the Chinese Remainder Theorem the map 
$f: IP(m_1m_2) \rightarrow IP(m_1) \times IP(m_2)$ via $$f(x) = (x \mod m_1, x \mod m_2)$$
 is a bijection.
\end{proof}

Our next result expresses the value of $imph(n)$ in terms of the prime factorization of $n$. The similarity to 
$\varphi(n)$ is striking.

\begin{lemma}
The function $imph(n)$ takes on the following values.
\begin{enumerate}[i.]
\item $imph(1)=1$.
\item  For any $n\in \N$, \begin{equation} imph(2^n) =0. \end{equation}
\item Let $p$ be prime. Then, for any $n\in \N$ 
\begin{equation}\label{eq:imph-p-to-n}
imph\left( p^n \right) = p^{n-1}(p-2).
\end{equation}
\item Let $p_1,p_2,\ldots, p_k$ be the distinct prime factors of $m$. Then  
\begin{equation}
imph(m) =m \prod_{i=1}^k \left(1-\frac{2}{p} \right).
\end{equation}
\end{enumerate}
\end{lemma}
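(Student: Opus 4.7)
The plan is to dispatch the four parts in order, with parts (i) and (ii) being direct inspections, part (iii) a short inclusion--exclusion on residues mod $p$, and part (iv) an immediate consequence of (iii) combined with the multiplicativity established in Lemma~\ref{imphmult}.

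For part (i), I would simply observe $IP(1)=\{1\}$ since $\gcd(0,1)=\gcd(1,1)=1$, so $imph(1)=1$. For part (ii), the key point is that $x-1$ and $x$ are consecutive integers, so exactly one of them is even; that one then shares the factor $2$ with $2^n$, and membership in $IP(2^n)$ is impossible. Hence $IP(2^n)=\emptyset$. As we will see, this is actually a special case of the formula in part (iii) when $p=2$.

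For part (iii), I would count the complement of $IP(p^n)$ inside $\{1,2,\ldots,p^n\}$. Because $p$ is prime, $\gcd(x,p^n)>1$ is equivalent to $p\mid x$, and $\gcd(x-1,p^n)>1$ is equivalent to $p\mid x-1$. Each of the residue classes $0 \pmod p$ and $1 \pmod p$ accounts for exactly $p^{n-1}$ elements of $\{1,\ldots,p^n\}$, and these two classes are disjoint since $0\not\equiv 1\pmod p$ for any prime $p$. Subtracting from $p^n$ yields $imph(p^n)=p^n-2p^{n-1}=p^{n-1}(p-2)$, which reduces to $0$ when $p=2$ and so recovers part (ii).

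For part (iv), I would write $m=p_1^{a_1}\cdots p_k^{a_k}$ in prime factorization and apply Lemma~\ref{imphmult} repeatedly:
\[
imph(m)=\prod_{i=1}^{k} imph(p_i^{a_i}) = \prod_{i=1}^{k} p_i^{a_i-1}(p_i-2) = m\prod_{i=1}^{k}\left(1-\frac{2}{p_i}\right).
\]
There is no genuine obstacle in any of these steps; the only point that could trip one up is the disjointness check in part (iii), and noting there that $p=2$ is not a problem because $0$ and $1$ are still distinct residues. Everything else is bookkeeping once Lemma~\ref{imphmult} is in hand.
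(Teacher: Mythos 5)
Your proposal is correct and follows the same route as the paper: the paper dismisses the prime-power formula \eqref{eq:imph-p-to-n} as routine (your disjoint residue-class count of the classes $0$ and $1 \bmod p$ is exactly the intended argument) and then derives the general case from Lemma~\ref{imphmult}, just as you do. Your observation that part (ii) is the $p=2$ specialization of part (iii) is a nice touch, and your part (iv) even silently corrects the paper's typo by writing $p_i$ rather than $p$ in the product.
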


The proof of~\eqref{eq:imph-p-to-n} is routine. The general result then follows by combining~\eqref{eq:imph-p-to-n} with 
Lemma~\ref{imphmult}.

\subsection{The Burnside {\it coup de grace}!}

\begin{lemma}
Let $T_m$ and $T_x$ be the triangles with vertices $(0,0),(1,0),(m,h)$ and $(0,0),(1,0),(x,h)$ respectively. If $T_x$ is unimodularly equivalent to $T_m$, then $x$ is congruent modulo $h$ to an element of the set 
$$S =  \left\{ m,m^{-1}, 1-m,1-m^{-1},(1-m)^{-1}, (1-m^{-1})^{-1} \right\}.$$
\end{lemma}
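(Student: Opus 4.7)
The plan is to exploit the fact that any affine unimodular map carrying $T_m$ onto $T_x$ must restrict to a bijection of their three vertices (vertices being the extreme points of a triangle, an affine invariant). Since both triangles share the base $\{(0,0),(1,0)\}$ and have an apex at height $\pm h$, there are exactly $3!=6$ such vertex bijections, and the goal is to show that each one forces the apex of $T_x$ to have $x$-coordinate congruent to one of the six elements of $S$.

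For each permutation $\sigma\in S_3$ of the vertices of $T_m$, I will explicitly construct a realizing unimodular map in three stages: (i) translate the vertex designated $\sigma(1)$ to the origin; (ii) apply a Bezout matrix sending $\sigma(2)$ to $(1,0)$; (iii) if necessary, compose with the reflection $\begin{pmatrix}1&0\\0&-1\end{pmatrix}$ so that the image of $\sigma(3)$ lands at height $+h$ rather than $-h$. The clean hypotheses $\gcd(m,h)=\gcd(m-1,h)=1$ are exactly what make step (ii) possible: they guarantee that the vectors $(m,h)$ and $(m-1,h)$ are primitive, so from $am+bh=1$ (respectively $a(m-1)+bh=1$) I can assemble a matrix of either determinant whose first column is the chosen primitive vector. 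Tracking the image of the third vertex in each of the six cases yields, in turn, the six values
$$m,\ m^{-1},\ 1-m,\ (1-m)^{-1},\ 1-m^{-1},\ (1-m^{-1})^{-1}\pmod h,$$
where the modular inverses are read directly off the Bezout identities used in step (ii).

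The whole argument is essentially a bookkeeping exercise and the only real hazard is orientation: the Bezout step produces a matrix whose determinant sign must either be chosen deliberately or later corrected by the vertical reflection to keep the apex in the upper half plane. A small algebraic aside will also be needed in the case that begins by sending $(m,h)$ to the origin: the literal output of the computation is something like $-m(1-m)^{-1}$, which must be recognised as $(1-m^{-1})^{-1}\pmod h$ in order to match the presentation of $S$ in the lemma. With these two caveats accounted for, the six computations are each a two-line exercise in the extended Euclidean algorithm and together exhaust every unimodular equivalence from $T_m$ to some $T_x$.
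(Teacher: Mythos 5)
Your overall strategy --- reduce to the $3!=6$ vertex bijections and handle each one --- is the same as the paper's, and your bookkeeping remarks (orientation, and rewriting $-m(1-m)^{-1}$ as $(1-m^{-1})^{-1}$) are on target. But as written the argument proves the wrong implication. For each bijection $\sigma$ you \emph{construct} one unimodular map $L_\sigma$ (translate, Bezout, reflect) and read off where it sends the apex; this shows that each of the six residues in $S$ is \emph{attained} by some triangle equivalent to $T_m$. The lemma asserts the converse: if an \emph{arbitrary} unimodular map $L$ carries $T_m$ onto $T_x$, then $x$ must land in one of those residue classes. Your $L$ and your constructed $L_\sigma$ realize the same vertex bijection but have different targets ($T_x$ versus $T_{x_\sigma}$), so the existence of $L_\sigma$ by itself places no constraint on $x$.

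The gap is small and there are two standard ways to close it. The paper's way: for a given bijection the affine map is uniquely determined by the three prescribed vertex images, so one solves for its matrix with $x$ left as an unknown --- e.g.\ for $(0,0)\mapsto(1,0)$, $(1,0)\mapsto(x,h)$, $(m,h)\mapsto(0,0)$ the matrix is $\begin{pmatrix} x-1 & (m-mx-1)/h\\ h & -m \end{pmatrix}$ --- and integrality of the entry $(m-mx-1)/h$ forces $x\equiv 1-m^{-1}\pmod h$ directly, for every admissible $x$ at once. Alternatively, keep your construction but add the observation that $L\circ L_\sigma^{-1}$ is a unimodular map fixing $(0,0)$ and $(1,0)$ and sending $(x_\sigma,h)$ to $(x,h)$; such a map must be the shear $\begin{pmatrix}1 & c\\ 0 & 1\end{pmatrix}$ with $c=(x-x_\sigma)/h\in\Z$, whence $x\equiv x_\sigma\pmod h$. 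Either patch completes your proof.
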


\begin{proof}
An unimodular map from $T_m$ to $T_x$ can be viewed as a bijection from the 3 element set $\{(0,0),(1,0), (m,h)\}$ to the 
3 element set $\{ (0,0), (1,0), (x,h) \}$. There are 6 bijections and each one gives an element of $S$. As an example we  show how we get $(1-m^{-1}) \in S$. On considering the bijection
$$(0,0) \mapsto (1,0), (1,0) \mapsto (x,h), (m,h) \mapsto (0,0),$$
we obtain the unimodular transformation $M\vec{v} + (1,0)$, where $M$ is the matrix 
$$ \begin{pmatrix} x-1 & (m-mx-1)/h\\ h & -m \end{pmatrix}.$$
Since $(m-mx-1)/h \in \Z$, we have the congruence $$(m-mx-1) \equiv 0 \pmod{h}.$$
It now follows that $$x \equiv 1-m^{-1} \pmod{h}.$$
\end{proof}

\subsubsection{The arithmetic function ${\mathcal T}(n)$}

\begin{defn} For a positive integer $n$, let ${\mathcal T}(n)$ be the number of unimodularly inequivalent clean triangles of area $ n/2$.
\end{defn}

Since there are no clean triangles of integral area, ${\mathcal T}(n) =0$ when $n$ is even. Our goal is to find formulae 
for ${\mathcal T}(n)$ when $n$ is odd. To do so we begin by defining 6 maps $g_i : IP(n) \rightarrow IP(n), i=1,\dots, 6.$
\begin{enumerate}
\item $ g_1 : m \mapsto m;$
\item $ g_2 : m \mapsto m^{-1} \mod n;$
\item $g_3: m \mapsto 1-m \mod n;$
\item $ g_4 : m \mapsto 1-m^{-1} \mod n;$
\item $ g_5 : m \mapsto (1-m)^{-1} \mod n;$
\item $g_6: m \mapsto (1-m^{-1})^{-1} \mod n. $
\end{enumerate}
 Let 
$$ Fix(g_i,n) = \textrm{ the set of fixed points of } g_i.$$
Then by Burnside's lemma
\begin{equation}
\mathcal{T}(n) = \frac{1}{6}\sum_{i =1}^6 \#Fix(g_i,n)
\end{equation}
To complete our task we need to determine the cardinalities of $Fix(g_i,n)$ for $i=1,\ldots,6$. The following lemma gives us some of them. 

\begin{lemma}
Let $n$ be an odd integer. We have the following:
\begin{eqnarray*}
\#Fix(g_1,n)& = & \#IP(n)= imph(n); \\
\#Fix(g_2,n)& = & \#\{x \, :\, x^2 =1 \pmod{n}, x\in IP(n)\};  \\
\#Fix(g_3,n)& = & \#\{x \, :\, x=2^{-1} \pmod{n}, x\in IP(n) \}=1; \\
\#Fix(g_4,n)& = & \#\{x \, :\, x^2-x+1 =0 \pmod{n}, x\in IP(n)\}; \\
\#Fix(g_5,n)& = & \#\{x \, :\, x^2-x+1 =0 \pmod{n}, x \in IP(n)\}; \\
\#Fix(g_6,n)& = & \#\{x \, :\, x=2 \pmod{n},x\in IP(n)\}=1.
\end{eqnarray*}
Consequently,
\begin{equation}\label{eq:initT}
{\mathcal T}(n) = \frac{imph(n) + \#Fix(g_2,n) + 2 \cdot \#Fix(g_4,n) +2}{6}.
\end{equation}
\end{lemma}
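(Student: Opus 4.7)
The plan is to treat each $g_i$ separately, unfold the fixed‑point equation $g_i(x)\equiv x\pmod n$ into an algebraic congruence, and then verify in each case that the resulting set coincides with the one listed. Because every $x\in IP(n)$ is a unit mod $n$ (and so is $1-x$, since $\gcd(x-1,n)=1$), one can freely clear denominators when manipulating these congruences.

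Concretely, $g_1$ is the identity, so $\#Fix(g_1,n)=\#IP(n)=imph(n)$. For $g_2$ the equation $x\equiv x^{-1}\pmod n$ is equivalent to $x^2\equiv 1\pmod n$, which gives the stated set. For $g_3$, $x\equiv 1-x\pmod n$ becomes $2x\equiv 1\pmod n$; since $n$ is odd, $2$ is invertible modulo $n$, so $x\equiv 2^{-1}\pmod n$ is the unique candidate. For $g_4$ the equation $x\equiv 1-x^{-1}\pmod n$, multiplied by $x$, becomes $x^2-x+1\equiv 0\pmod n$. For $g_5$, $x\equiv (1-x)^{-1}\pmod n$ gives $x(1-x)\equiv 1\pmod n$, i.e.\ again $x^2-x+1\equiv 0\pmod n$. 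For $g_6$, $x\equiv (1-x^{-1})^{-1}\pmod n$ gives $x(1-x^{-1})\equiv 1$, i.e.\ $x-1\equiv 1\pmod n$, so $x\equiv 2\pmod n$.

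The one subtlety, which I view as the main (very mild) obstacle, is making sure the candidate fixed points actually lie in $IP(n)$. For $g_3$, the candidate is $x\equiv 2^{-1}\pmod n$; clearly $\gcd(x,n)=1$, and from $2(x-1)\equiv -1\pmod n$ together with $\gcd(2,n)=1$ one concludes $\gcd(x-1,n)=1$, so $x\in IP(n)$ and $\#Fix(g_3,n)=1$. For $g_6$, the candidate $x\equiv 2\pmod n$ satisfies $\gcd(2,n)=1$ (as $n$ is odd) and $\gcd(2-1,n)=\gcd(1,n)=1$, so $x\in IP(n)$ and $\#Fix(g_6,n)=1$. For $g_2$, $g_4$, $g_5$ no candidate needs to be produced: the listed sets are defined as the solutions that already lie in $IP(n)$.

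Finally, observing $\#Fix(g_4,n)=\#Fix(g_5,n)$ because both are cut out by the same congruence $x^2-x+1\equiv 0\pmod n$, and plugging the six counts into Burnside's lemma
\begin{equation*}
\mathcal T(n)=\frac{1}{6}\sum_{i=1}^{6}\#Fix(g_i,n),
\end{equation*}
one obtains
\begin{equation*}
\mathcal T(n)=\frac{imph(n)+\#Fix(g_2,n)+2\,\#Fix(g_4,n)+2}{6},
\end{equation*}
which is the asserted identity \eqref{eq:initT}.
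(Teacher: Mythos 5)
Your proof is correct: each fixed-point condition $g_i(x)\equiv x\pmod n$ unfolds to exactly the congruence listed, the membership checks $2^{-1},2\in IP(n)$ for $g_3,g_6$ go through since $n$ is odd, and substituting the six counts into the Burnside formula already set up in the text yields \eqref{eq:initT}. The paper states this lemma without proof, and your argument is precisely the routine verification it leaves to the reader, so nothing further is needed.
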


The task of determining the values of $\#Fix(g_2,n)$ and $\#Fix(g_4,n)$ necessitates a brief visit to the isles of quadratic congruences.

\subsection{A pleasant sojourn in the isles of quadratic congruences}

Two fundamental tools in elementary number theory are Hensel's lemma and the Chinese Remainder Theorem. 
By combining them we get the following two well-known results.

\begin{lemma}\label{rootsof1}
Let $p\geq 3$ be prime. Then for any $k \in \N$
\begin{equation}
\#\{x \, :\, x^2 =1 \pmod{p^k}, 1\leq x <p^k\} = 2.
\end{equation}
It follows that for any odd integer $n$,
\begin{equation}
\#\{x \, :\, x^2 =1 \pmod{n}, 1\leq x <n\} = 2^{\omega(n)},
\end{equation}
where $\omega(n) = $ the number of distinct prime divisors of n.
\end{lemma}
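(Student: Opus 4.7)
The plan is to handle the prime-power case directly and then bootstrap to arbitrary odd $n$ via the Chinese Remainder Theorem, as the hint suggests. For the prime-power statement, I would factor $x^2 - 1 = (x-1)(x+1)$ and observe that if $p^k \mid (x-1)(x+1)$ with $p$ an odd prime, then $p$ cannot divide both factors simultaneously: any common divisor of $x-1$ and $x+1$ must divide their difference $2$, and $\gcd(p,2)=1$. Hence the full prime-power $p^k$ is forced into one of the two factors, which gives $x \equiv 1 \pmod{p^k}$ or $x \equiv -1 \pmod{p^k}$. These are distinct representatives in $\{1,\dots,p^k-1\}$ because $p^k \geq 3$, yielding exactly two solutions. (Alternatively, one can invoke Hensel's lemma on the polynomial $f(x) = x^2-1$: its only roots mod $p$ are $\pm 1$, and $f'(\pm 1) = \pm 2 \not\equiv 0 \pmod p$, so each lifts uniquely to a root mod $p^k$.)

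For the statement about general odd $n$, I would write the prime factorization $n = p_1^{k_1}\cdots p_r^{k_r}$ with $r = \omega(n)$ and invoke the ring isomorphism
\begin{equation*}
\Z/n\Z \;\cong\; \Z/p_1^{k_1}\Z \times \cdots \times \Z/p_r^{k_r}\Z
\end{equation*}
provided by the Chinese Remainder Theorem. An element squares to $1$ in the product ring if and only if each coordinate squares to $1$ in its factor, and by the prime-power case each factor contributes exactly $2$ choices. Multiplying across the $r$ factors yields $2^r = 2^{\omega(n)}$ solutions modulo $n$.

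No serious obstacle arises here: the only mild subtlety is confirming that the two solutions $\pm 1$ in the prime-power case are genuinely distinct, which requires $p^k > 2$, a condition automatically satisfied since $p \geq 3$. Everything else is a direct application of unique factorization in $\Z$ (to split $(x-1)(x+1)$) together with the CRT.
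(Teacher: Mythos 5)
Your proposal is correct and matches the paper's (unwritten-out) argument: the paper simply asserts that the lemma follows by combining Hensel's lemma with the Chinese Remainder Theorem, which is exactly the route you take for the general odd $n$, and your parenthetical Hensel argument for the prime-power case is the one the paper has in mind. Your primary factorization argument via $\gcd(x-1,x+1)\mid 2$ is a slightly more elementary but equivalent way to settle the prime-power case, so there is nothing to add.
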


\begin{lemma}\label{rootsofquad}
Let $p\geq 3$ be prime. Let 
$$ N_{p^k}(x^2-x+1) = \#\{x \, :\, x^2-x+1 \equiv 0 \!\!\!\! \pmod{p^k}, 1\leq x <p^k, k \in \N\}.$$
Then, 
\begin{equation}\label{eq:noofroots}
N_{p^k}(x^2-x+1) =\left\{ \begin{array}{ll} 1,  & p=3, k=1, \\
0, & p=3, k \geq 2, \\ 2, & p \equiv 1 \pmod{6}, \\ 0, & p \equiv 5 \pmod{6}. \end{array} \right.
\end{equation}
Let $n$ be odd and let $\omega(n) = $ the number of distinct prime divisors of n.
Then from~\eqref{eq:noofroots} we can derive the following cardinalities.
\begin{enumerate}[i.]
\item If all of the prime divisors $p$ of $n$  satisfy the congruence $p\equiv 1 \pmod{6}$, then
\begin{equation}
\#\{x \, :\, x^2-x+1 \equiv 0 \pmod{n}, 1 \leq x < n\} = 2^\omega(n).
\end{equation}
\item If $n \equiv 0\pmod{3}, n\not\equiv 0 \pmod{9}$, and all of the other prime divisors $p$ of $n$ satisfy the congruence $p \equiv 1 \pmod{6}$, then
\begin{equation}
 \#\{x \, :\, x^2-x+1 \equiv 0 \pmod{n}, 1 \leq x < n\} = 2^{\omega(n)-1}
\end{equation}
\item If $n\equiv 0 \pmod{9}$ or $n$ has a prime divisor $p$ such that $p \equiv 5 \pmod{6}$, then 
\begin{equation}
 \#\{x \, :\, x^2-x+1 \equiv 0 \pmod{n}, 1 \leq x < n\} = 0.
\end{equation}
\end{enumerate}
\end{lemma}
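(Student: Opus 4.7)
The plan is to handle each prime power separately via Hensel's lemma and a discriminant calculation, then assemble the global count for odd $n$ using the Chinese Remainder Theorem. Set $f(x) = x^2 - x + 1$, so that $f'(x) = 2x - 1$. The identity $(2x-1)^2 = 4(x^2 - x + 1) - 3$ shows that any root $x_0$ of $f$ modulo $p$ satisfies $f'(x_0)^2 \equiv -3 \pmod{p}$; hence $p \mid f'(x_0)$ exactly when $p = 3$. For $p \ne 3$, Hensel's lemma lifts each simple root of $f$ modulo $p$ uniquely to a root of $f$ modulo $p^k$, and so $N_{p^k}(f) = N_p(f)$ for every $k \ge 1$.

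The count $N_p(f)$ for $p \ge 5$ is then decided by whether the discriminant $-3$ is a square in $\mathbb{F}_p$. A standard application of quadratic reciprocity shows that $-3$ is a quadratic residue modulo $p$ iff $p \equiv 1 \pmod{3}$; for odd $p$ this is equivalent to $p \equiv 1 \pmod{6}$. In that case the quadratic has two distinct roots and $N_{p^k}(f) = 2$; when $p \equiv 5 \pmod{6}$ there are no roots and $N_{p^k}(f) = 0$. The remaining case $p = 3$ is handled by hand: checking $x = 0, 1, 2$ gives $x = 2$ as the unique root modulo $3$, and writing $x = 2 + 3t$ yields $f(2+3t) = 3 + 9t + 9t^2 \equiv 3 \pmod{9}$, which is never zero; thus the root fails to lift and $N_{3^k}(f) = 0$ for all $k \ge 2$. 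This establishes equation~\eqref{eq:noofroots}.

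For the global counts, the Chinese Remainder Theorem gives $N_n(f) = \prod_i N_{p_i^{k_i}}(f)$ when $n = \prod p_i^{k_i}$ is odd. Case (i) then follows because each prime contributes a factor of $2$, yielding $2^{\omega(n)}$; case (ii) follows because the factor at $p=3$ contributes $1$ while the remaining $\omega(n) - 1$ primes each contribute $2$; case (iii) follows because either the factor at $3$ contributes $0$ (when $9 \mid n$) or a factor at some $p \equiv 5 \pmod{6}$ contributes $0$, collapsing the product. The main obstacle is the quadratic residue computation for $-3$; this is classical, but the slickest route is to observe that $-3$ is a square in $\mathbb{F}_p$ iff $\mathbb{F}_p^{*}$ contains a primitive sixth root of unity, equivalently $6 \mid p - 1$.
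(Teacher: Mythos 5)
Your proof is correct and follows exactly the route the paper intends: the paper states that Lemma~\ref{rootsofquad} follows by ``combining Hensel's lemma and the Chinese Remainder Theorem'' together with the evaluation of $\left(\frac{-3}{p}\right)$, and your argument is a complete written-out version of precisely that sketch, including the correct by-hand treatment of the non-lifting root at $p=3$.
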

A simple check shows that the two sets 
$$\{x \, :\, x^2-x+1 \equiv 0 \pmod{n}, 1\leq x <n\}$$
and 
$$\{x \, :\, x^2-x+1 \equiv 0 \pmod{n}, x \in IP(n)\} $$
are equal. Consequently Lemma~\ref{rootsofquad} gives us the value of $\#Fix(g_4,n)$. \emph{An incidental remark}: to prove Lemma~\ref{rootsofquad} we need to invoke the following special case for quadratic residues.
\begin{lemma} 
Let p be prime with $p>3$. Then 
\begin{equation}
\left(\frac{-3}{p}\right) = \left\{\begin{array}{cl} 1, & p\equiv 1 \pmod{6}, \\ -1, & p \equiv 5 \pmod{6}. \end{array}\right.
\end{equation}
\end{lemma}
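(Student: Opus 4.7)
The plan is to reduce $\left(\frac{-3}{p}\right)$ to $\left(\frac{p}{3}\right)$ using the multiplicativity of the Legendre symbol together with the first supplement and quadratic reciprocity, and then read off the answer by examining $p$ modulo $3$ (which, combined with the oddness of $p$, is equivalent to knowing $p$ modulo $6$).

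First I would factor
\begin{equation*}
\left(\frac{-3}{p}\right) = \left(\frac{-1}{p}\right)\left(\frac{3}{p}\right).
\end{equation*}
By the first supplement to quadratic reciprocity, $\left(\frac{-1}{p}\right) = (-1)^{(p-1)/2}$. By quadratic reciprocity applied to the odd primes $3$ and $p$ (noting $(3-1)/2 = 1$),
\begin{equation*}
\left(\frac{3}{p}\right)\left(\frac{p}{3}\right) = (-1)^{(p-1)/2}, \qquad \text{so} \qquad \left(\frac{3}{p}\right) = (-1)^{(p-1)/2}\left(\frac{p}{3}\right).
\end{equation*}
Multiplying, the two factors of $(-1)^{(p-1)/2}$ cancel and I obtain the clean identity
\begin{equation*}
\left(\frac{-3}{p}\right) = \left(\frac{p}{3}\right).
\end{equation*}

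The final step is routine: $\left(\frac{p}{3}\right) = 1$ exactly when $p \equiv 1 \pmod{3}$, and equals $-1$ when $p \equiv 2 \pmod{3}$, since the nonzero squares mod $3$ are just $\{1\}$. Because $p > 3$ is odd, the residues mod $6$ available are $1$ and $5$, with $p \equiv 1 \pmod 6$ iff $p \equiv 1 \pmod 3$, and $p \equiv 5 \pmod 6$ iff $p \equiv 2 \pmod 3$. This yields the claimed dichotomy.

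There is no real obstacle here — the only mild subtlety is bookkeeping the sign $(-1)^{(p-1)/2}$ carefully to confirm that it indeed cancels; a reader who prefers to avoid quadratic reciprocity could instead invoke the cyclic structure of $\mathbb{F}_p^{\ast}$: a primitive sixth root of unity $\zeta$ satisfies $\zeta^2 - \zeta + 1 = 0$, whence $(2\zeta - 1)^2 = -3$, so $-3$ is a square in $\mathbb{F}_p$ iff $\mathbb{F}_p^{\ast}$ contains an element of order $6$, iff $6 \mid p-1$. Either route makes the proof a few lines long.
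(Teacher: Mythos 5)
Your proof is correct: the factorization $\left(\frac{-3}{p}\right)=\left(\frac{-1}{p}\right)\left(\frac{3}{p}\right)$, the cancellation of the two signs $(-1)^{(p-1)/2}$ via the first supplement and reciprocity, and the reduction to $\left(\frac{p}{3}\right)$ are all carried out accurately, as is the translation between residues mod $3$ and mod $6$. The paper states this lemma without proof (it is offered only as an ``incidental remark''), so there is no in-paper argument to compare against; yours is the standard one the authors presumably had in mind. Your alternative argument via a primitive sixth root of unity satisfying $\zeta^2-\zeta+1=0$ is a particularly apt choice here, since it ties the value of $\left(\frac{-3}{p}\right)$ directly to the solvability of the congruence $x^2-x+1\equiv 0 \pmod{p}$ that the surrounding Lemma~\ref{rootsofquad} is actually counting.
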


We need to take greater care in determining the value of $\#Fix(g_2,n)$. One of the roots of $x^2-1 = 0 \pmod{p}$, with $p$ prime, equals 1. Since $1\not\in IP(p)$, we need to remove it from consideration
and rework the proof of Lemma~\ref{rootsof1}.  In doing so we get 
\begin{equation}
\#\{x \, :\, x^2 =1 \pmod{p^k}, x\in IP(p^k)\} = 1.
\end{equation}
We now invoke the Chinese Remainder Theorem to obtain 
\begin{equation}
\#Fix(g_2,n)=  \#\{x \, :\, x^2 =1 \pmod{n}, x\in IP(n)\} =1.
\end{equation}

\subsection{Putting the pieces together}

We now combine the various pieces to get a formula for ${\mathcal T}(n)$ in terms of the prime factorization of $n$.

\begin{theorem}
Let $n$ be odd and let $\omega(n) = $ the number of distinct prime divisors of n. There are three possibilities for the value of ${\mathcal T}(n)$.
\begin{enumerate}[i.]
\item If $n \equiv 0 \pmod{9}$ or $n$ has a prime divisor $p$ such that $p \equiv 5 \pmod{6}$, then
\begin{equation}
{\mathcal T}(n) = \frac{imph(n)+3}{6}.
\end{equation}
\item If $n \equiv 0 \pmod{3}$ and all of the other prime divisors $p$ of $n$ satisfy the congruence  $p \equiv 1 \pmod{6}$, then
\begin{equation}
{\mathcal T}(n) = \frac{imph(n)+2^{\omega(n)} +3}{6}.
\end{equation}
\item If all of the prime divisors $p$ of $n$ satisfy the congruence  $p \equiv 1 \pmod{6}$, then
\begin{equation}
{\mathcal T}(n) = \frac{imph(n)+2^{\omega(n)+1} +3}{6}.
\end{equation}
\end{enumerate}
\end{theorem}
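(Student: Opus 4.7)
The plan is simply to collect together the pieces already in hand. Starting from equation~\eqref{eq:initT}, which expresses
$$\mathcal{T}(n)=\frac{imph(n)+\#Fix(g_2,n)+2\cdot\#Fix(g_4,n)+2}{6},$$
I would substitute the value $\#Fix(g_2,n)=1$ that was established at the end of the ``quadratic congruences'' subsection, so that uniformly
$$\mathcal{T}(n)=\frac{imph(n)+2\cdot\#Fix(g_4,n)+3}{6}.$$
All three formulas in the theorem then have the common summand $imph(n)+3$ in the numerator, and only the contribution of $\#Fix(g_4,n)$ varies with the arithmetic of $n$.

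Next I would split into the three cases of Lemma~\ref{rootsofquad}, noting first the equality of the sets $\{x:x^2-x+1\equiv0\pmod n\}$ and $\{x\in IP(n):x^2-x+1\equiv0\pmod n\}$ already pointed out in the text, so that $\#Fix(g_4,n)$ equals the count supplied by Lemma~\ref{rootsofquad}. In case~(i), when $n\equiv0\pmod 9$ or some prime divisor of $n$ is $\equiv 5\pmod 6$, that lemma gives $\#Fix(g_4,n)=0$, and substitution produces $\mathcal{T}(n)=(imph(n)+3)/6$. In case~(ii), when $3\,\|\,n$ and every other prime of $n$ is $\equiv 1\pmod 6$, we have $\#Fix(g_4,n)=2^{\omega(n)-1}$, and $2\cdot 2^{\omega(n)-1}=2^{\omega(n)}$ gives the stated formula. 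In case~(iii), when every prime of $n$ is $\equiv 1\pmod 6$, we have $\#Fix(g_4,n)=2^{\omega(n)}$, and $2\cdot 2^{\omega(n)}=2^{\omega(n)+1}$ gives the last formula.

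There is no genuine obstacle, since the real content has already been dispatched: Burnside, the enumeration of the six group elements acting via $g_1,\dots,g_6$, the computation of each fixed-point set, and the analysis of the quadratic congruences $x^2\equiv 1$ and $x^2-x+1\equiv 0\pmod n$. The only care needed is bookkeeping --- making sure one uses $\#Fix(g_2,n)=1$ (not $2^{\omega(n)}$, which would be the count before removing the root $x=1$ not lying in $IP(n)$) and that the three cases of Lemma~\ref{rootsofquad} exhaust the odd positive integers. Once these remarks are in place, the theorem is a one-line substitution in each case.
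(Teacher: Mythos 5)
Your proposal is correct and is precisely the argument the paper intends (the paper leaves the proof implicit, saying only that one ``combines the various pieces''): substitute $\#Fix(g_2,n)=1$ into equation~\eqref{eq:initT} and then read off $\#Fix(g_4,n)$ from the three cases of Lemma~\ref{rootsofquad}. Your bookkeeping, including the doubling $2\cdot 2^{\omega(n)-1}=2^{\omega(n)}$ and $2\cdot 2^{\omega(n)}=2^{\omega(n)+1}$ and the check that the three cases exhaust the odd integers, matches the paper's computation exactly.
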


Initially we ended at this juncture. However, on sending a draft to Igor Shparlinski, he suggested that calculating the mean-value of $imph(n)$ would  give us the mean-value of ${\mathcal T}(n)$. This valuable suggestion leads to a connection with the Feller-Tornier constant. 

\section{ The mean-value of ${\mathcal T}(n)$}

We begin by determining the mean-value of the arithmetical function $imph(n)/n$. This can be done using various mean-value theorems for arithmetic functions. Our chosen method is to apply the following corollary of Wintner's mean-value theorem.

\begin{theorem} 
Let $\P$ denote the set of primes in $\Z$ and let $f: \N \rightarrow \C$ be a multiplicative function. If 
\begin{equation}
\sum_{p\in \P} \frac{|f(p)-1|}{p} < \infty
\end{equation}
and 
\begin{equation}
\sum_{p\in \P} \sum_{k=2}^\infty \frac{|f(p^k|}{p^k} < \infty,
\end{equation}
then $M(f)$, the mean-value of $f$, exists and 
\begin{equation}
M(f) = \prod_{p\in \P} \left( 1+ \frac{f(p)-1}{p} + \sum_{k=2}^\infty \frac{f(p^k)-f(p^{k-1})}{p^k} \right).
\end{equation}
\end{theorem}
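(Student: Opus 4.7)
The plan is to obtain this corollary from the classical Wintner mean-value theorem together with M\"obius inversion. Recall Wintner's theorem: if $f = \mathbf{1}*g$ (Dirichlet convolution) and $\sum_{n \geq 1} |g(n)|/n < \infty$, then $M(f)$ exists and equals $\sum_{n \geq 1} g(n)/n$. So the strategy is to introduce $g := \mu * f$, verify the absolute-convergence hypothesis for $g$ under our two assumptions on $f$, and then recognize the resulting Dirichlet series as the Euler product in the statement.

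First I would note that since $f$ is multiplicative, $g := \mu * f$ is also multiplicative, with $g(1) = 1$ and $g(p^k) = f(p^k) - f(p^{k-1})$ for every prime $p$ and $k \geq 1$. Because $|g|$ is multiplicative on coprime arguments, the convergence of $\sum_n |g(n)|/n$ is equivalent to the convergence of
$$\sum_{p \in \P} \sum_{k=1}^\infty \frac{|g(p^k)|}{p^k}.$$
The $k=1$ contribution is $\sum_p |f(p)-1|/p$, finite by the first hypothesis. For $k \geq 2$, the triangle inequality $|g(p^k)| \leq |f(p^k)| + |f(p^{k-1})|$ bounds the remaining double sum by the second hypothesis plus $\sum_p p^{-1} \sum_{j \geq 1} |f(p^j)|/p^j$. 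Splitting the inner sum at $j = 1$ and using $|f(p)| \leq |f(p)-1| + 1$ controls the $j = 1$ piece via $\sum_p |f(p)-1|/p + \sum_p p^{-2}$, while the $j \geq 2$ piece is dominated by the second hypothesis.

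With absolute convergence in hand, Wintner's theorem yields $M(f) = \sum_n g(n)/n$. Since $g$ is multiplicative and the series converges absolutely, a standard rearrangement factors it as an Euler product
$$M(f) = \prod_{p \in \P} \left( 1 + \sum_{k=1}^\infty \frac{g(p^k)}{p^k} \right).$$
Substituting $g(p^k) = f(p^k) - f(p^{k-1})$ and separating off the $k = 1$ term yields the formula in the statement.

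The main technical obstacle is the verification that $\sum_n |g(n)|/n < \infty$. The two hypotheses are phrased in terms of $f$, but $g(p^k)$ is a difference $f(p^k) - f(p^{k-1})$ whose shifted piece is not directly controlled; one has to re-sum by orders and, in particular, pay an extra $\sum_p p^{-2}$ in order to convert the bound $|f(p)-1|/p$ (from the first hypothesis) into usable control on $|f(p)|/p^2$. Once this bookkeeping is discharged, the rest of the argument is a formal manipulation of absolutely convergent series.
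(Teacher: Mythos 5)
Your proposal is correct, but note that the paper does not actually prove this statement: it is imported wholesale from Schwarz--Spilker \cite[Corollary~2.3]{SS} with a bare citation, so there is no in-paper argument to compare against. What you have written is, in effect, the standard derivation of that corollary from the classical form of Wintner's theorem, and the details check out. Setting $g=\mu*f$, the reduction of $\sum_n |g(n)|/n<\infty$ to $\sum_p\sum_{k\ge 1}|g(p^k)|/p^k<\infty$ is valid for a nonnegative multiplicative majorant (one direction because prime powers form a subset of $\N$, the other by bounding partial sums by $\exp\bigl(\sum_p\sum_k |g(p^k)|/p^k\bigr)$), and your bookkeeping for the $k\ge 2$ terms is right: after shifting indices the only piece not directly covered by the two hypotheses is $\sum_p |f(p)|/p^2$, which you correctly control by $\sum_p |f(p)-1|/p+\sum_p p^{-2}$. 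The final Euler-product factorization of the absolutely convergent series $\sum_n g(n)/n$, with $g(p^k)=f(p^k)-f(p^{k-1})$, reproduces the stated formula exactly. The only caveat is that you are taking the classical Wintner theorem ($f=\mathbf{1}*g$ with $\sum_n|g(n)|/n<\infty$ implies $M(f)=\sum_n g(n)/n$) as a black box, which is reasonable since that is precisely the result the authors are implicitly leaning on; your writeup supplies the deduction they omit.
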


See~\cite[Corollary 2.3, page 51--52]{SS}. The multiplicative function $imph(n)/n$ satisfies the hypotheses of the theorem. 
Furthermore for $k \geq 2$, 
$$\frac{imph(p^k)}{p^k} - \frac{imph(p^{k-1})}{p^{k-1}} =0.$$
Consequently we obtain the following. 

\begin{cor}
The mean-value of the arithmetical function $imph(n)/n$ is 
\begin{equation}\label{eq:mean-value}
M\left(\frac{imph(n)}{n}\right) = \frac{1}{2} \prod_{p\geq 3, p\in \P}\left(1-\frac{2}{p^2}\right).
\end{equation}
\end{cor}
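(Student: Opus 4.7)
The plan is to apply the version of Wintner's theorem cited from \cite{SS} to the multiplicative function $f(n)=imph(n)/n$. Multiplicativity of $f$ is immediate from Lemma~\ref{imphmult}, so the two things to verify before reading off the answer are the absolute-convergence hypotheses and the explicit Euler factors. I would handle the prime $p=2$ separately throughout, since $imph(2^k)=0$ makes that local factor degenerate.

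First I would record the values of $f$ on prime powers. For an odd prime $p$ and $k\ge 1$, $imph(p^k)=p^{k-1}(p-2)$ gives $f(p^k)=(p-2)/p$, which is independent of $k$. For $p=2$, $f(2^k)=0$ for all $k\ge 1$. The two hypotheses are now easy: $|f(p)-1|/p$ equals $2/p^2$ for odd $p$ and $1/2$ at $p=2$, so $\sum_p |f(p)-1|/p$ converges; likewise $|f(p^k)|/p^k\le 1/p^{k+1}$ for odd $p$ and vanishes at $p=2$, making the double sum $\sum_p\sum_{k\ge 2}|f(p^k)|/p^k$ convergent. Thus the hypotheses of Wintner's theorem are met and $M(f)$ exists.

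Next I would compute the Euler factor at each prime. The critical observation is that for every prime $p$ and every $k\ge 2$ we have $f(p^k)=f(p^{k-1})$: for odd $p$ both equal $(p-2)/p$, and for $p=2$ both equal $0$. Hence every tail sum $\sum_{k\ge 2}(f(p^k)-f(p^{k-1}))/p^k$ in the Wintner product vanishes, and each Euler factor collapses to $1+(f(p)-1)/p$. For $p=2$ this factor is $1-1/2=1/2$; for odd $p$ it is $1+((p-2)/p-1)/p=1-2/p^2$. Assembling the product gives
\begin{equation*}
M\!\left(\frac{imph(n)}{n}\right)=\frac{1}{2}\prod_{p\ge 3}\left(1-\frac{2}{p^2}\right),
\end{equation*}
which is the claimed identity.

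There is no real obstacle; the only point that requires a bit of care is the degenerate factor at $p=2$. A clean way to present this is to note that the quoted remark in the excerpt, $imph(p^k)/p^k-imph(p^{k-1})/p^{k-1}=0$ for $k\ge 2$, already does the bulk of the work, reducing Wintner's three-term Euler factor to the two-term expression $1+(f(p)-1)/p$ and leaving only a one-line calculation at each prime.
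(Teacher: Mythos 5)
Your proof is correct and follows essentially the same route as the paper: apply the quoted form of Wintner's theorem to the multiplicative function $f(n)=imph(n)/n$, observe that $f(p^k)=f(p^{k-1})$ for $k\ge 2$ so each Euler factor collapses to $1+(f(p)-1)/p$, and evaluate that factor as $1/2$ at $p=2$ and $1-2/p^2$ at odd primes. (One trivial slip: for odd $p$ the bound $|f(p^k)|/p^k\le 1/p^{k+1}$ fails once $p\ge 5$, but the correct bound $|f(p^k)|/p^k\le 1/p^k$ still yields the needed convergence.)
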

We note that 
\begin{equation}
\prod_{p\geq 3, p\in \P}\left(1-\frac{2}{p^2}\right) = 
 \left( 1 + \sum_{d > 1, d \textrm{ odd } }\frac{\mu(d)2^{\omega(d)}}{d^2} \right).
\end{equation}

\begin{cor}
\begin{equation}\label{eq:AV1}
\lim_{x \rightarrow \infty}\frac{\sum_{n \leq x} imph(n)}{x^2}=  \frac{1}{4} \prod_{p\geq 3, p\in \P}\left(1-\frac{2}{p^2}\right).
\end{equation}
Furthermore,
\begin{equation}\label{eq:AV2}
\lim_{x \rightarrow \infty}\frac{\sum_{n \leq x} {\mathcal T}(n)}{x^2}=  \frac{1}{24} \prod_{p\geq 3, p\in \P}\left(1-\frac{2}{p^2}\right).
\end{equation}
\end{cor}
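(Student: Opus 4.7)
The plan is to obtain \eqref{eq:AV1} from the mean-value statement \eqref{eq:mean-value} by a standard Abel summation step, and then deduce \eqref{eq:AV2} from \eqref{eq:AV1} by a term-by-term comparison using the three explicit formulas for ${\mathcal T}(n)$.

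\medskip

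\noindent\textbf{Step 1: Abel summation.} Write $C = \tfrac{1}{2}\prod_{p\geq 3}(1-2/p^2)$. By definition of the mean-value, \eqref{eq:mean-value} asserts that $A(x):=\sum_{n\leq x} imph(n)/n = Cx+o(x)$. Applying Abel (partial) summation with $a_n = imph(n)/n$ and weights $b_n = n$, I would write
\begin{equation*}
\sum_{n\leq x} imph(n) \;=\; \lfloor x\rfloor A(x) - \sum_{n< x} A(n).
\end{equation*}
Substituting $A(t) = Ct + o(t)$ and using $\sum_{n\le x} n = \tfrac{x^2}{2}+O(x)$ yields $\sum_{n\leq x} imph(n) = Cx^2 - \tfrac{C}{2}x^2 + o(x^2) = \tfrac{C}{2}x^2 + o(x^2)$, which is exactly \eqref{eq:AV1}.

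\medskip

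\noindent\textbf{Step 2: Reducing ${\mathcal T}(n)$ to $imph(n)$.} The formulas in the previous theorem can be combined uniformly as
\begin{equation*}
{\mathcal T}(n) \;=\; \frac{imph(n)}{6} + E(n), \qquad |E(n)| \;\leq\; \frac{2^{\omega(n)+1}+3}{6},
\end{equation*}
valid for all odd $n$, while ${\mathcal T}(n)=imph(n)=0$ for even $n$. Thus
\begin{equation*}
\sum_{n\le x}{\mathcal T}(n) \;=\; \frac{1}{6}\sum_{n\le x} imph(n) \;+\; O\!\left(\sum_{n\le x} 2^{\omega(n)}\right).
\end{equation*}

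\medskip

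\noindent\textbf{Step 3: Disposing of the error.} The remaining ingredient is the elementary bound $\sum_{n\le x} 2^{\omega(n)} = O(x\log x)$, which follows from $\sum_{n\le x} 2^{\omega(n)} \leq \sum_{n \leq x} d(n) = O(x\log x)$ (where $d(n)$ is the number of divisors), since $2^{\omega(n)}$ counts squarefree divisors of $n$ and hence is bounded by $d(n)$. Therefore the error term is $o(x^2)$, and dividing by $x^2$ and letting $x\to\infty$ gives
\begin{equation*}
\lim_{x\to\infty}\frac{\sum_{n\le x}{\mathcal T}(n)}{x^2} \;=\; \frac{1}{6}\cdot\frac{C}{2} \;=\; \frac{1}{24}\prod_{p\geq 3,\,p\in\P}\left(1-\frac{2}{p^2}\right),
\end{equation*}
which is \eqref{eq:AV2}. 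No step here is a serious obstacle; the only point that requires a moment's care is Step 1, where one must be sure the error propagation through Abel summation really does produce $o(x^2)$ and not merely $O(x^2)$, but this is immediate once $A(x)=Cx+o(x)$ is summed against $b_{n+1}-b_n=1$.
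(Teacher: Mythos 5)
Your proof is correct and follows essentially the same route as the paper: partial summation applied to the mean value of $imph(n)/n$ to get \eqref{eq:AV1}, followed by the observation that the $2^{\omega(n)}$ and constant terms in the three formulas for ${\mathcal T}(n)$ contribute only $o(x^2)$, so \eqref{eq:AV2} is $\tfrac16$ of \eqref{eq:AV1}. The only (minor) difference is in disposing of the secondary term: the paper cites Grosswald's result that $\sum_{n\le x}2^{\Omega(n)}=O(x\log^2 x)$, whereas your elementary bound $2^{\omega(n)}\le d(n)$ with $\sum_{n\le x}d(n)=O(x\log x)$ is, if anything, simpler and more self-contained.
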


\begin{proof}
We obtain~\eqref{eq:AV1} by combining~\eqref{eq:mean-value} with the technique of partial summation. The main contribution to the value of 
${\mathcal T}(n)$ is given by $imph(n)$ --- the contribution of $2^{\omega(n)}$ is secondary. Thus~\eqref{eq:AV2} follows from~\eqref{eq:AV1}.

To see that the contribution of $2^{\omega(n)}$ to the average order of ${\mathcal T}(n)$ is overshadowed by the contribution of $imph(n)$ we invoke a result of Grosswald. In~\cite{G} he proved that the average order of $2^{\Omega(n)}$, where $\Omega(n)$ is the total number of prime factors of $n$, 
 is $O(x\log^2(x))$.
\end{proof}

The mean-value of $imph(n)/n$ is related to an obscure number-theoretic constant called the Feller--Tornier constant~\cite{Wiki}. This constant is the infinite product 
\begin{equation}\label{eq:CFT}
C_{FT}= \frac{1}{2}+\left(\frac{1}{2} \prod_{ p\in \P}\left(1-\frac{2}{p^2}\right)\right).
\end{equation}
This number is the density of the set of all positive integers that have an even number of distinct prime factors raised to a power larger than one (ignoring any prime factors which appear only to the first power). Another product representation of $C_{FT}$ is 
$$\frac{1}{2}\left(1+\frac{1}{\zeta(2)} \prod_{ p\in \P}\left(1-\frac{1}{p^2-1}\right)\right).$$
The constant is named after its discoverers William Feller (1906--1970) and Erhard Tornier
(1894--1982).

\smallskip

\noindent {\bf Concluding Exercise.} Find a formula for the number of unimodularly inequivalent clean lattice parallelograms of fixed area.

\smallskip

\noindent {\bf Acknowledgements:} The first-named author is a great  admirer of the masterful opinion pieces of the Great and Stout-Hearted Doron Zeilberger. The discerning DEER will undoubtedly notice DZ's literary influence on this manuscript. DZ also pointed out that $imph(n)$ is not in the OEIS. Something that needs to be remedied! We thank Igor Shparlinski for his insightful suggestion to calculate the mean-value of $imph(n)$.

\end{document}